\newtheorem{definition}{Definition}
\newtheorem{lemma}[definition]{Lemma}
\newtheorem{proposition}[definition]{Proposition}
\newtheorem{theorem}[definition]{Theorem}
\newtheorem{remark}[definition]{Remark}
\newtheorem{example}[definition]{Example}
\title{Implication in weakly and dually weakly orthomodular lattices}
\author{Ivan~Chajda and Helmut~L\"anger}
\date{}
\begin{document}
\footnotetext[1]{Support of the research of both authors by \"OAD, project CZ~04/2017, and by IGA, project P\v rF~2018~012, as well as support of the second author by the Austrian Science Fund (FWF), project I~1923-N25, is gratefully acknowledged.}
\maketitle
\begin{abstract}
Weakly orthomodular and dually weakly orthomodular lattices were introduced by the authors in a recent paper. Similarly as for orthomodular lattices we try to introduce an implication in these lattices which can be easily axiomatized and which yields a nice lattice structure. As shown in the paper, this can be realized in several different ways. Moreover, we reveal the connection of weakly and dually weakly orthomodular lattices to residuated structures. Moreover, we provide a characterization of these lattices by means of certain generalized measures.
\end{abstract}

{\bf AMS Subject Classification:} 06C15, 03G10, 03G12

{\bf Keywords:} Weakly orthomodular lattice, dually weakly orthomodular lattice, complementation, implication, Sasaki implication, residuated l-groupoid

Orthomodular lattices were introduced independently by K.~Husimi (\cite H) and G.~Birkhoff and J.~von~Neumann (\cite{BvN}) as an algebraic semantics of the logic of quantum mechanics. The reader can find a detailed theory of orthomodular lattices in the monographs \cite B and \cite K. Implication in orthomodular lattices was investigated in \cite{CHL01} -- \cite{CHL08}. It was pointed out by the first author and S.~Bonzio in \cite{BC} that the orthomodular law may be satisfied also in complemented lattices which are not orthomodular because their complementation need not be an orthocomplementation. This fact was the starting point for the study \cite{CL18} by the authors where the concepts of a weakly orthomodular respectively dually weakly orthomodular lattice were introduced and elaborated. It was shown that these lattices have a number of features similar to those of orthomodular lattices despite the fact that de Morgan laws do not hold there. The classes of such lattices form varieties and it was shown that these varieties are congruence permutable and congruence regular and, of course, congruence distributive. It is a natural question if weakly orthomodular and/or dually weakly orthomodular lattices can serve as an algebraic semantics for a reasonable propositional logic. As usually, when a non-classical logic is investigated, one is focused on the logical connective implication because this connective is preferably used in logical deduction and derivations. For orthomodular lattices, there exist several ways how to introduce implication. The most popular is the so-called Sasaki implication but also other possible modifications were treated by several authors. Due to the lack of de Morgan laws for weakly and dually weakly orthomodular lattices, we have here more possibilities how to define an implication. Hence, our problem is to choose those which can be characterized within the theory of weakly and/or dually weakly orthomodular lattices by simple identities and, moreover, have a structure with a nice interpretation. This is the aim of the paper.

In the following all algebras considered are assumed to have non-empty universe.

We start with basic concepts. An {\em orthomodular lattice} is an algebra $\mathbf L=(L,\vee,\wedge,{}',0,1)$ of type $(2,2,1,0,0)$ such that $(L,\vee,\wedge,0,1)$ is a bounded lattice and $'$ is an {\em orthocomplementation}, i.e.\ it satisfies the identities $x\vee x'\approx1$, $x\wedge x'\approx0$, $(x\vee y)'\approx x'\wedge y'$ and $(x\wedge y)'\approx x'\vee y'$ and the {\em double negation law} $(x')'\approx x$, and moreover, it satisfies the so-called {\em orthomodular law}, i.e.
\[
x\leq y\text{ implies }y=x\vee(y\wedge x')
\]
or, equivalently,
\[
x\leq y\text{ implies }x=y\wedge(x\vee y').
\]
These conditions are equivalent to the following identities:
\begin{equation}\label{equ7}
(x\wedge y)\vee(x\wedge(x\wedge y)')\approx x,
\end{equation}
respectively
\begin{equation}\label{equ8}
(x\vee y)\wedge(x\vee(x\vee y)')\approx x
\end{equation}
which are used in the following definition.

\begin{definition}\label{def1}
{\rm(\cite{CL18})} A {\em lattice} $\mathbf L=(L,\vee,\wedge,{}')$ with a unary operation is called {\em weakly orthomodular} if it satisfies identity {\rm(\ref{equ7})} and {\em dually weakly orthomodular} if it satisfies identity {\rm(\ref{equ8})}. The algebra $\mathbf L$ is said to satisfy the {\em double negation law} if it satisfies the identity $(x')'\approx x$.
\end{definition}

It was shown by the authors in \cite{CL18} that conditions (\ref{equ7}) and (\ref{equ8}) are independent even for lattices with complementation. Of course, every orthomodular lattice is both weakly and dually weakly orthomodular. In a (dually) weakly orthomodular lattice the unary operation need neither be antitone nor an involution, see e.g.\ \cite{CL18} for examples. From Definition~\ref{def1} it is not clear if such a lattice is bounded and if the unary operation is a complementation and interchanges $0$ and $1$. This question is answered by the following lemma.

\begin{lemma}\label{lem3}
{\rm(\cite{CL18})} Every weakly orthomodular lattice $\mathbf L=(L,\vee,\wedge,{}')$ has a greatest element $1$ and satisfies the identity $x\vee x'\approx1$. If $\mathbf L$ has, additionally, a smallest element $0$ then $0'\approx1$. Every dually weakly orthomodular lattice $\mathbf L=(L,\vee,\wedge,{}')$ has a smallest element $0$ and satisfies the identity $x\wedge x'\approx0$. If $\mathbf L$ has, additionally, a greatest element $1$ then $1'\approx0$.
\end{lemma}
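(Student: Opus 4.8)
The plan is to derive all four assertions from the single defining identity, using only monotonicity of the lattice operations; neither antitonicity of $'$ nor the double negation law will be needed.

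First I would extract a usable consequence of (\ref{equ7}). For arbitrary $z\le x$, substituting $y:=z$ makes $x\wedge y=x\wedge z=z$, so (\ref{equ7}) collapses to
\[
z\vee(x\wedge z')=x\qquad\text{whenever }z\le x.\qquad(*)
\]
This is precisely the ``orthomodular-law'' shape recalled in the excerpt, and it is the engine of the whole argument.

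Next I would exhibit the greatest element. Given arbitrary $a,b$, I apply $(*)$ with $z:=a$ and $x:=a\vee b$, which is legitimate since $a\le a\vee b$, obtaining $a\vee b=a\vee\bigl((a\vee b)\wedge a'\bigr)$. Because $(a\vee b)\wedge a'\le a'$, monotonicity of $\vee$ gives $a\vee b\le a\vee a'$, whence $b\le a\vee a'$. Since $b$ is arbitrary, the element $a\vee a'$ is an upper bound of $L$, and being itself in $L$ it is the greatest element; as this holds for every $a$, all the elements $a\vee a'$ coincide. Calling their common value $1$ yields at once the first claim and the identity $x\vee x'\approx1$. The third claim is then immediate: if a least element $0$ exists, then $0\vee0'=0'$, while $x\vee x'\approx1$ forces $0\vee0'=1$, so $0'=1$.

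The dually weakly orthomodular case is the order-dual of the above. Specializing (\ref{equ8}) via $y:=z$ for $z\ge x$ produces $z\wedge(x\vee z')=x$ for all $z\ge x$; applying this with $x:=a\wedge b$ and $z:=a$ and using $(a\wedge b)\vee a'\ge a'$ shows $a\wedge a'\le b$ for every $b$, so $a\wedge a'$ is the least element $0$ and $x\wedge x'\approx0$, and finally $1\wedge1'=1'=0$ gives $1'\approx0$. I expect the only genuine difficulty to be spotting the two substitutions that carry the proof: collapsing $x\wedge y$ to $z$ to obtain $(*)$, and the choice $x:=a\vee b$ that makes the right-hand side of $(*)$ visibly lie below $a\vee a'$; everything after that is routine.
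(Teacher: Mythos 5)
Your proof is correct. Note that the paper itself gives no proof of this lemma --- it is quoted from the reference \cite{CL18} --- so there is nothing in this document to compare your argument against. Your derivation is sound: the substitution $y:=z$ with $z\le x$ does collapse identity (\ref{equ7}) to $z\vee(x\wedge z')=x$, the choice $z:=a$, $x:=a\vee b$ together with $(a\vee b)\wedge a'\le a'$ correctly yields $b\le a\vee a'$ for all $b$, and uniqueness of the greatest element gives both its existence and the identity $x\vee x'\approx1$; the remaining claims and the dual half follow exactly as you describe.
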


As shown in Lemma~\ref{lem3}, if a lattice $\mathbf L=(L,\vee,\wedge,')$ with a unary operation is both weakly and dually weakly orthomodular then $\mathbf L$ is bounded and $'$ a complementation on $\mathbf L$. However, $'$ need neither be antitone nor an involution and hence de Morgan laws need not hold in $\mathbf L$. Examples of both weakly and dually weakly orthomodular lattices whose unary operation is neither antitone nor an involution can be found in \cite{CL18}.

In Boolean algebras $\mathbf L=(L,\vee,\wedge,',0,1)$ the implication $x\rightarrow y$ is introduced as the term operation $x'\vee y$. This is the so-called ``classical implication''. In orthomodular lattices which are not Boolean algebras this kind of implication does not have the properties expected for implication and hence other versions of implication are studied, in particular the so-called {\em Sasaki implication} defined by
\[
x\rightarrow y:=(x'\wedge y')\vee y.
\]
(In an orthomodular lattice the {\em Sasaki projection} $p_b(a)$ of $a$ onto $b$ is defined by $(a\vee b')\wedge b$. Hence the Sasaki implication $x\rightarrow y$ in orthomodular lattices is nothing else than $(p_{y'}(x))'$.) This Sasaki implication was investigated by J.~C.~Abbott (\cite{A76}). Unfortunately, we cannot use it here in all the cases because of lack of de Morgan laws. First of all, we consider the implication defined by $x\rightarrow y:=(x\vee y)'\vee y$.

We call a binary operation $\rightarrow$ on a lattice $\mathbf L=(L,\vee,\wedge)$ a {\em {\rm D}-implication} on $\mathbf L$ if it satisfies the following identities:
\begin{align}
             (x\vee y)\rightarrow x & \approx y\rightarrow x,\label{equ1} \\
x\vee y\vee((x\vee z)\rightarrow y) & \approx z\rightarrow(x\vee y),\label{equ2} \\
    (x\vee y)\wedge(x\rightarrow y) & \approx y.\label{equ3}
\end{align}
(As we will see, {\rm D}-implications have something to do with dually weak orthomodularity.)

\begin{theorem}\label{th2}
Let $\mathbf L=(L,\vee,\wedge,0)$ be a lattice with $0$. Then the following hold:
\begin{enumerate}
\item[{\rm(i)}] If $\rightarrow$ is a {\rm D}-implication on $\mathbf L$ and $x':=x\rightarrow0$ for all $x\in L$ then $(L,\vee,\wedge,{}')$ is a dually weakly orthomodular lattice.
\item[{\rm(ii)}] If $(L,\vee,\wedge,{}')$ is a dually weakly orthomodular lattice and $x\rightarrow y:=(x\vee y)'\vee y$ for all $x,y\in L$ then $\rightarrow$ is a {\rm D}-implication on $\mathbf L$.
\item[{\rm(iii)}] The correspondence described in {\rm(i)} and {\rm(ii)} is one-to-one. Thus a binary operation $\rightarrow$ on $L$ is a {\rm D}-implication on $\mathbf L$ if and only if there exists a dually weakly orthomodular lattice $(L,\vee,\wedge,{}')$ such that $x\rightarrow y=(x\vee y)'\vee y$ for all $x,y\in L$.
\end{enumerate}
\end{theorem}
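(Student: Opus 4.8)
The whole statement turns on one observation: the three defining identities of a D-implication already force the explicit formula $x\rightarrow y\approx(x\vee y)'\vee y$ (with $x':=x\rightarrow0$), after which every remaining claim is either a one-line lattice computation or a direct substitution. The plan is therefore to extract this formula first and then feed it into each of the three parts.

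For part~(i) I would specialise identity~(\ref{equ2}) by putting $y:=0$. Using $x\vee0\approx x$ together with the definition $w':=w\rightarrow0$, the left-hand side collapses to $x\vee(x\vee z)'$ and the right-hand side becomes $z\rightarrow(x\vee0)\approx z\rightarrow x$; after renaming the variables this gives the target identity
\[
x\rightarrow y\approx(x\vee y)'\vee y,
\]
which I shall call $(\star)$. Substituting $(\star)$ into (\ref{equ3}) yields $(x\vee y)\wedge\bigl((x\vee y)'\vee y\bigr)\approx y$, and interchanging the names $x$ and $y$ in this identity produces precisely (\ref{equ8}); hence $(L,\vee,\wedge,{}')$ is dually weakly orthomodular. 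It is worth noting that only (\ref{equ2}) and (\ref{equ3}) enter here, (\ref{equ1}) being unnecessary for the conclusion.

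For part~(ii) I would insert the definition $x\rightarrow y:=(x\vee y)'\vee y$ into each axiom. Identity~(\ref{equ1}) reduces, via $(x\vee y)\vee x\approx x\vee y$, to $(x\vee y)'\vee x\approx(y\vee x)'\vee x$, a pure lattice triviality; identity~(\ref{equ2}) reduces, via associativity, commutativity and absorption, to $x\vee y\vee(x\vee y\vee z)'$ on both sides and is thus again a lattice identity; and (\ref{equ3}) becomes $(x\vee y)\wedge\bigl((x\vee y)'\vee y\bigr)\approx y$, which is exactly (\ref{equ8}) with $x$ and $y$ interchanged and therefore holds because $\mathbf L$ is dually weakly orthomodular. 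Here one uses Lemma~\ref{lem3} to know that the smallest element guaranteed by dual weak orthomodularity is the given $0$, so that $x\rightarrow0$ is computed with the correct constant.

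For part~(iii) I would verify that the two constructions are mutually inverse. Starting from a D-implication, forming $x':=x\rightarrow0$ and then $(x\vee y)'\vee y$ returns the original operation by $(\star)$; starting from a dually weakly orthomodular lattice, forming $x\rightarrow y:=(x\vee y)'\vee y$ and then $x\rightarrow0\approx(x\vee0)'\vee0\approx x'$ returns the original unary operation. This gives the one-to-one correspondence, and the concluding equivalence is then just the combination of parts~(i) and~(ii) with $(\star)$. The only genuinely inventive step is spotting that the substitution $y:=0$ in (\ref{equ2}) already delivers $(\star)$; everything downstream I expect to be routine, the one point requiring care being the identification of the two candidate zeros in part~(ii).
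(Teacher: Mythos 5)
Your proof is correct and follows essentially the same route as the paper: the key step in both is specialising identity~(\ref{equ2}) at $y:=0$ to obtain $x\rightarrow y\approx(x\vee y)'\vee y$, after which (\ref{equ3}) delivers dual weak orthomodularity and parts~(ii) and~(iii) are routine lattice computations. The only (harmless) stylistic difference is that you extract this formula as a standalone identity before substituting, whereas the paper runs the same computation pointwise for $a\leq b$; your observation that (\ref{equ1}) is not needed in part~(i) also matches the paper's proof.
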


\begin{proof}
\
\begin{enumerate}
\item[(i)] Let $\rightarrow$ be a D-implication on $\mathbf L$ and $x':=x\rightarrow0$ for all $x\in L$. If $a,b\in L$ and $a\leq b$ then by applying (\ref{equ2}) and (\ref{equ3}) we compute
\begin{align*}
b\wedge(a\vee b') & =b\wedge(a\vee(b\rightarrow0))=b\wedge(a\vee0\vee((a\vee b)\rightarrow0))=b\wedge(b\rightarrow(a\vee0))= \\
& =(b\vee a)\wedge(b\rightarrow a)=a.
\end{align*}
Thus $(L,\vee,\wedge,{}')$ is a dually weakly orthomodular lattice.
\item[(ii)] Let $(L,\vee,\wedge,{}')$ be a dually weakly orthomodular lattice and $x\rightarrow y:=(x\vee y)'\vee y$ for all $x,y\in L$. Then
\begin{enumerate}
\item[(\ref{equ1})] $(x\vee y)\rightarrow x\approx(x\vee y\vee x)'\vee x\approx(y\vee x)'\vee x\approx y\rightarrow x$,
\item[(\ref{equ2})] $x\vee y\vee((x\vee z)\rightarrow y)\approx x\vee y\vee(x\vee z\vee y)'\vee y\approx(z\vee x\vee y)'\vee x\vee y\approx z\rightarrow(x\vee y)$,
\item[(\ref{equ3})] $(x\vee y)\wedge(x\rightarrow y)\approx(x\vee y)\wedge((x\vee y)'\vee y)\approx(x\vee y)\wedge(y\vee(x\vee y)')\approx y$.
\end{enumerate}
and hence $\rightarrow$ is a D-implication on $\mathbf L$.
\item[(iii)] If $\rightarrow$ is a D-implication on $\mathbf L$ and $x':=x\rightarrow0$ and $x\Rightarrow y:=(x\vee y)'\vee y$ for all $x,y\in L$ then
\[
x\Rightarrow y\approx(x\vee y)'\vee y\approx((x\vee y)\rightarrow0)\vee y\approx y\vee0\vee((y\vee x)\rightarrow0)\approx x\rightarrow(y\vee0)\approx x\rightarrow y.
\]
If, conversely, $(L,\vee,\wedge,{}')$ is a dually weakly orthomodular lattice and $x\rightarrow y:=(x\vee y)'\vee y$ and $\bar x:=x\rightarrow0$ for every $x,y\in L$ then
\[
\bar x\approx x\rightarrow0\approx(x\vee0)'\vee0\approx x'\vee0\approx x'.
\]
\end{enumerate}
\end{proof}

\begin{remark}
In Boolean algebras $(x\vee y)'\vee y$ coincides with the classical implication since
\[
(x\vee y)'\vee y\approx(x'\wedge y')\vee y\approx(x'\vee y)\wedge(y'\vee y)\approx(x'\vee y)\wedge1\approx x'\vee y.
\]
\end{remark}

The implication introduced before Theorem~\ref{th2} has a simple axiomatization and hence we are going to use it in our next investigations. However, we can also introduce another concept which enables us to assign every such implication a nice structure similarly as it was done for orthomodular lattices by Abbott (\cite{A76}), for Boolean algebras (\cite{A67}) or for MV-algebras (\cite{CHK}).

\begin{definition}
Let $\mathbf L=(L,\vee,\wedge)$ be a lattice. A {\em family of compatible dually weakly orthomodular sublattices of $\mathbf L$} is a family $(([x),\vee,\wedge,{}^x);x\in L)$ such that for every $x\in L$, $([x),\vee,\wedge,{}^x)$ is a dually weakly orthomodular sublattice of $\mathbf L$ and the {\em compatibility condition} $z^x\vee y=z^y$ for all $x,y,z\in L$ with $x\leq y\leq z$ holds. Here and in the following $[x)$ denotes the set $\{y\in L\mid x\leq y\}$.
\end{definition}

We are going to show that to every D-implication there can be assigned a family of compatible dually weakly orthomodular sublattices and that every D-implication can be constructed just by such a family.

\begin{theorem}\label{th1}
\
\begin{enumerate}
\item[{\rm(i)}] Let $\mathbf L=(L,\vee,\wedge)$ be a lattice and $\rightarrow$ a {\rm D}-implication on $\mathbf L$ and put $x^y:=x\rightarrow y$ for all $x,y\in L$. Then $(([x),\vee,\wedge,{}^x);x\in L)$ is a family of compatible dually weakly orthomodular sublattices of $\mathbf L$.
\item[{\rm(ii)}] Let $\mathbf L=(L,\vee,\wedge)$ be a lattice and $(([x),\vee,\wedge,{}^x);x\in L)$ a family of compatible dually weakly orthomodular sublattices of $\mathbf L$ and put $x\rightarrow y:=(x\vee y)^y$ for all $x,y\in L$. Then $\rightarrow$ is a {\rm D}-implication on $\mathbf L$.
\item[{\rm(iii)}] The correspondence between {\rm D}-implications on $\mathbf L$ and families of compatible dually weakly orthomodular sublattices of $\mathbf L$ described in {\rm(i)} and {\rm(ii)} is one-to-one.
\end{enumerate}
\end{theorem}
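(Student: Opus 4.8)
The plan is to verify that each of the two constructions is well-defined and has the required properties, and then to check that they are mutually inverse; throughout I would lean on the computation already carried out in Theorem~\ref{th2}(i), which is essentially the special case in which the least element of the filter is $0$.

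For (i) I would first confirm that ${}^y$ maps $[y)$ into itself. From axiom (\ref{equ3}) one has $y=(x\vee y)\wedge(x\rightarrow y)\leq x\rightarrow y$, so $x^y=x\rightarrow y\in[y)$ whenever $x\geq y$; since $[y)$ is plainly a sublattice of $\mathbf L$ with least element $y$, it then remains to establish (\ref{equ8}) for $([y),\vee,\wedge,{}^y)$, namely $(a\vee b)\wedge(a\vee(a\vee b)^y)=a$ for all $a,b\geq y$. Setting $c:=a\vee b$, the crucial intermediate identity is $a\vee(c\rightarrow y)=c\rightarrow a$: I would obtain it by writing $a=a\vee y$ and $c=a\vee c$ and invoking (\ref{equ2}) with $(x,y,z)=(a,y,c)$, exactly as in Theorem~\ref{th2}(i) but with the general least element $y$ replacing $0$. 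Axiom (\ref{equ3}) then yields $c\wedge(c\rightarrow a)=a$, which is (\ref{equ8}). Finally, the compatibility condition $z^x\vee y=z^y$ for $x\leq y\leq z$ reads $(z\rightarrow x)\vee y=z\rightarrow y$, and I would again read this off (\ref{equ2}), now with $(x,y,z)=(y,x,z)$ and the simplifications $y\vee x=y$ and $y\vee z=z$.

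For (ii) the three axioms follow almost immediately after unfolding $x\rightarrow y=(x\vee y)^y$. Axiom (\ref{equ1}) is trivial since both sides equal $(x\vee y)^x$. Axiom (\ref{equ3}) becomes $(x\vee y)\wedge(x\vee y)^y=y$, which is precisely the law $a\wedge a^y=y$ that Lemma~\ref{lem3} guarantees in the dually weakly orthomodular sublattice $([y),\vee,\wedge,{}^y)$ with least element $y$. Axiom (\ref{equ2}) is the only place where the hypothesis on the family is really used: unfolding both sides reduces it to $x\vee y\vee(x\vee y\vee z)^y=(x\vee y\vee z)^{x\vee y}$, which is exactly the compatibility condition applied to the chain $y\leq x\vee y\leq x\vee y\vee z$.

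For (iii) I would compose the two passages. Going from a D-implication through (i) and then (ii) produces the operation $x\mapsto(x\vee y)\rightarrow y$, and $(x\vee y)\rightarrow y=x\rightarrow y$ holds by (\ref{equ1}); going from a family through (ii) and then (i) gives, for $x\geq y$, the value $(x\vee y)^y=x^y$ (since $x\vee y=x$ there), so each operation ${}^y$ is recovered on its filter $[y)$. Hence the correspondence is a bijection. The main obstacle I anticipate is the verification of (\ref{equ8}) in (i): it is the only genuinely computational point, and it requires careful bookkeeping of which principal filter each operation is being evaluated in, with the identity $a\vee(c\rightarrow y)=c\rightarrow a$ as its crux. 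Conceptually, the heart of the theorem is the observation made in (ii) that axiom (\ref{equ2}) is nothing other than the compatibility condition transcribed into the language of $\rightarrow$.
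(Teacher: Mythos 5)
Your proposal is correct and follows essentially the same route as the paper's proof: the same use of identities (\ref{equ2}) and (\ref{equ3}) (with the least element $0$ replaced by a general filter generator) to establish dual weak orthomodularity and compatibility in part (i), the same reduction of (\ref{equ2}) to the compatibility condition along the chain $y\leq x\vee y\leq x\vee y\vee z$ in part (ii), and the same two round-trip computations via (\ref{equ1}) and $x\vee y=x$ in part (iii). No gaps.
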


\begin{proof}
Let $a,b,c\in L$.
\begin{enumerate}
\item[(i)] At first we show how a D-implication induces a dually weakly orthomodular lattice $([a),\vee,\wedge,{}^a)$. Now, if $a\leq b$ then by applying identities (\ref{equ1}) -- (\ref{equ3}) we obtain
\[
a=(b\vee a)\wedge(b\rightarrow a)\leq b\rightarrow a=b^a.
\]
Moreover, if $a\leq b\leq c$ then
\begin{align*}
c\wedge(b\vee c^a) & =c\wedge(b\vee(c\rightarrow a))=c\wedge(b\vee a\vee((b\vee c)\rightarrow a))=c\wedge(c\rightarrow(b\vee a))= \\
                   & =(c\vee b)\wedge(c\rightarrow b)=b
\end{align*}								
and hence $([a),\vee,\wedge,{}^a)$ is a dually weakly orthomodular sublattice of $\mathbf L$. In order to check the compatibility condition we compute
\[
c^a\vee b=(c\rightarrow a)\vee b=b\vee a\vee((b\vee c)\rightarrow a)=c\rightarrow(b\vee a)=c\rightarrow b=c^b
\]
which proves the compatibility condition. Thus $(([x),\vee,\wedge,{}^x);x\in L)$ is a family of compatible dually weakly orthomodular sublattices of $\mathbf L$.
\item[(ii)] For the binary operation $\rightarrow$ on $L$ defined by $x\rightarrow y:=(x\vee y)^y$ for all $x,y\in L$ we compute
\begin{enumerate}
\item[(\ref{equ1})] $(x\vee y)\rightarrow x\approx(x\vee y\vee x)^x\approx(y\vee x)^x\approx y\rightarrow x$,
\item[(\ref{equ2})] $x\vee y\vee((x\vee z)\rightarrow y)\approx x\vee y\vee(x\vee z\vee y)^y\approx(x\vee y\vee z)^y\vee(x\vee y)\approx(x\vee y\vee z)^{x\vee y}\approx(z\vee x\vee y)^{x\vee y}=z\rightarrow(x\vee y)$,
\item[(\ref{equ3})] $(x\vee y)\wedge(x\rightarrow y)\approx(x\vee y)\wedge(x\vee y)^y\approx y$
\end{enumerate}
showing that $\rightarrow$ is a D-implication on $\mathbf L$.
\item[(iii)] If $\rightarrow$ is a D-implication on $\mathbf L$, $([x),\vee,\wedge,{}^x);x\in L)$ denotes the corresponding family of compatible dually weakly orthomodular sublattices of $\mathbf L$ and $\Rightarrow$ denotes the D-implication corresponding to this family then $\Rightarrow$ coincides with $\rightarrow$ since
\[
x\Rightarrow y\approx(x\vee y)^y\approx(x\vee y)\rightarrow y\approx(y\vee x)\rightarrow y\approx x\rightarrow y.
\]
Conversely, if $(([x),\vee,\wedge,{}^x);x\in L)$ is a family of compatible dually weakly orthomodular sublattices of $\mathbf L$, $\rightarrow$ denotes the corresponding D-implication and $(([x),\vee,$ $\wedge,{}_x);x\in L)$ denotes the family of compatible dually weakly orthomodular sublattices of $\mathbf L$ corresponding to this D-implication then the two families coincide since
\[
b_a=b\rightarrow a=(b\vee a)^a=b^a.
\]
\end{enumerate}
\end{proof}

Now, we turn to weakly orthomodular lattices. At first we investigate the Sasaki implication.

\begin{theorem}
Let $\mathbf L=(L,\vee,\wedge,\rightarrow,{}',0,1)$ be an algebra of type $(2,2,2,1,0,0)$ such that $(L,\vee,\wedge,0,1)$ is a bounded lattice and the identities
\begin{align*}
x\rightarrow y & \approx(x'\wedge y')\vee y, \\
         (x')' & \approx x
\end{align*}
are satisfied. Then $(L,\vee,\wedge,{}')$ is a weakly orthomodular lattice if and only if $\mathbf L$ satisfies the identities
\begin{align}
                                   0' & \approx1,\label{equ9} \\
(x\rightarrow0)\rightarrow(x\wedge y) & \approx x.\label{equ10}
\end{align}
\end{theorem}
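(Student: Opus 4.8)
The plan is to reduce the two candidate identities (\ref{equ9})--(\ref{equ10}) to the defining identity (\ref{equ7}) of weak orthomodularity by exploiting the Sasaki definition $x\rightarrow y\approx(x'\wedge y')\vee y$ together with the double negation law $(x')'\approx x$. The crucial preliminary computation is to unfold $x\rightarrow0$ and then the outer implication $(x\rightarrow0)\rightarrow(x\wedge y)$. First I would note that $x\rightarrow0\approx(x'\wedge0')\vee0\approx x'\wedge0'$, so that under (\ref{equ9}) this collapses to $x\rightarrow0\approx x'$. Substituting this into the outer implication and applying double negation yields
\[
(x\rightarrow0)\rightarrow(x\wedge y)\approx\bigl((x')'\wedge(x\wedge y)'\bigr)\vee(x\wedge y)\approx\bigl(x\wedge(x\wedge y)'\bigr)\vee(x\wedge y).
\]
Hence, once (\ref{equ9}) is available, identity (\ref{equ10}) is literally identity (\ref{equ7}) (with the two join-summands interchanged by commutativity of $\vee$). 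The whole proof therefore turns on this single simplification, and each implication of the equivalence becomes a short bookkeeping argument.

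For the direction from weak orthomodularity to (\ref{equ9})--(\ref{equ10}), assume $(L,\vee,\wedge,{}')$ is weakly orthomodular. Since $(L,\vee,\wedge,0,1)$ is a bounded lattice, $1$ is its greatest and $0$ its smallest element; by Lemma~\ref{lem3} a weakly orthomodular lattice has a greatest element and, when a smallest element $0$ is present, satisfies $0'\approx1$. As the greatest element is unique, the top guaranteed by Lemma~\ref{lem3} is the constant $1$, and we obtain (\ref{equ9}). With (\ref{equ9}) in hand, the preliminary computation gives $(x\rightarrow0)\rightarrow(x\wedge y)\approx(x\wedge(x\wedge y)')\vee(x\wedge y)$, and this equals $x$ precisely by (\ref{equ7}); thus (\ref{equ10}) holds.

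Conversely, assume (\ref{equ9}) and (\ref{equ10}). Here (\ref{equ9}) is granted outright, so the same computation yields $(x\wedge(x\wedge y)')\vee(x\wedge y)\approx(x\rightarrow0)\rightarrow(x\wedge y)\approx x$, the last step being (\ref{equ10}). Reordering the join gives identity (\ref{equ7}), which means $(L,\vee,\wedge,{}')$ is weakly orthomodular.

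The only genuinely non-formal point, and hence the step I would watch most carefully, is the role of (\ref{equ9}) in both directions: it is (\ref{equ9}) that turns the inner term $x\rightarrow0$ into the complement $x'$ and thereby lets the double negation law act. Without $0'\approx1$ the inner term would remain $x'\wedge0'$ and the outer implication would no longer collapse to the weak orthomodular term, so (\ref{equ9}) and (\ref{equ10}) must indeed be stated jointly rather than (\ref{equ10}) alone. In the forward direction the corresponding subtlety is merely to confirm that Lemma~\ref{lem3} applies here, i.e.\ that the distinguished top and bottom of the bounded lattice are the extremal elements the lemma speaks about; this is immediate from uniqueness of the greatest element.
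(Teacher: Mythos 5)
Your proposal is correct and takes essentially the same route as the paper: both hinge on using $0'\approx1$ to collapse $x\rightarrow0$ to $x'$ and then the double negation law to turn $(x\rightarrow0)\rightarrow(x\wedge y)$ into $(x\wedge(x\wedge y)')\vee(x\wedge y)$, i.e.\ the weak orthomodularity identity (\ref{equ7}). The only (cosmetic) difference is that in the converse direction the paper verifies the order-theoretic form $a\leq b\Rightarrow a\vee(b\wedge a')=b$, whereas you derive the identity (\ref{equ7}) directly, which matches the definition even more immediately.
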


\begin{proof}
First assume $(L,\vee,\wedge,{}')$ to be a weakly orthomodular lattice.
\begin{enumerate}
\item[(\ref{equ9})] This identity holds according to Lemma~\ref{lem3}.
\item[(\ref{equ10})] We have
\begin{align*}
(x\rightarrow0)\rightarrow(x\wedge y) & \approx(((x'\wedge0')\vee0)'\wedge(x\wedge y)')\vee(x\wedge y)\approx \\
                                      & \approx(((x'\wedge1)\vee0)'\wedge(x\wedge y)')\vee(x\wedge y)\approx \\
                                      & \approx((x'\vee0)'\wedge(x\wedge y)')\vee(x\wedge y)\approx((x')'\wedge(x\wedge y)')\vee(x\wedge y)\approx \\
																			& \approx(x\wedge(x\wedge y)')\vee(x\wedge y)\approx x.
\end{align*}
\end{enumerate}
Conversely, assume $\mathbf L$ to satisfy identities (\ref{equ9}) and (\ref{equ10}). Let $a,b\in L$ with $a\leq b$. Then
\begin{align*}
a\vee(b\wedge a') & =((b')'\wedge a')\vee a=((b'\vee0)'\wedge a')\vee a=(((b'\wedge1)\vee0)'\wedge a')\vee a= \\
                  & =(((b'\wedge0')\vee0)'\wedge a')\vee a=(b\rightarrow0)\rightarrow a=(b\rightarrow0)\rightarrow(b\wedge a)=b.
\end{align*}
Thus $(L,\vee,\wedge,{}')$ is a weakly orthomodular lattice.
\end{proof}

Let us note that in a Boolean algebra also the Sasaki implication $x\rightarrow y=(x'\wedge y')\vee y$ coincides with the classical implication $x'\vee y$.

For weakly orthomodular lattices we can define implication in a different way. The reason is that we would like to obtain a connective with nice properties and, moreover, it can be considered as a dual version of a D-implication in case the lattice is orthomodular.

We call a binary operation $\rightarrow$ on a lattice $\mathbf L=(L,\vee,\wedge,0)$ with $0$ a {\em {\rm W}-implication} on $\mathbf L$ if it satisfies the following identities:
\begin{align}
           (x\rightarrow0)\rightarrow0 & \approx x,\label{equ4} \\
((x\wedge y)\rightarrow0)\rightarrow x & \approx x,\label{equ5} \\
        (x\wedge y)\vee(x\rightarrow0) & \approx x\rightarrow y.\label{equ6}
\end{align}
(As we will see, {\rm W}-implications have something to do with weak orthomodularity.)

\begin{theorem}\label{th3}
Let $\mathbf L=(L,\vee,\wedge,0)$ be a lattice with $0$. Then the following hold:
\begin{enumerate}
\item[{\rm(i)}] If $\rightarrow$ is a {\rm W}-implication on $\mathbf L$ and $x':=x\rightarrow0$ for all $x\in L$ then $(L,\vee,\wedge,{}')$ is a weakly orthomodular lattice satisfying the double negation law.
\item[{\rm(ii)}] If $(L,\vee,\wedge,{}')$ is a weakly orthomodular lattice satisfying the double negation law and $x\rightarrow y:=x'\vee(x\wedge y)$ for all $x,y\in L$ then $\rightarrow$ is a {\rm W}-implication on $\mathbf L$.
\item[{\rm(iii)}] The correspondence described in {\rm(i)} and {\rm(ii)} is one-to-one. Thus a binary operation $\rightarrow$ on $L$ is a {\rm W}-implication on $\mathbf L$ if and only if there exists a weakly orthomodular lattice $(L,\vee,\wedge,{}')$ satisfying the double negation law such that $x\rightarrow y=x'\vee(x\wedge y)$ for all $x,y\in L$.
\end{enumerate}
\end{theorem}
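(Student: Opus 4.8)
The plan is to prove the three parts in order, and the organizing observation is that the formula $x\rightarrow y:=x'\vee(x\wedge y)$ appearing in (ii) and (iii), and the W-implication axiom (\ref{equ6}) read with $x':=x\rightarrow0$, assert the same thing, namely $x\rightarrow y\approx x'\vee(x\wedge y)$. Thus the relation between $\rightarrow$ and ${}'$ is essentially built into (\ref{equ6}), and the two directions of the correspondence will glue together almost automatically.

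For part (i) I would first derive the double negation law, which is immediate from (\ref{equ4}): $(x')'\approx(x\rightarrow0)\rightarrow0\approx x$. With this in hand I would establish the weak orthomodular identity (\ref{equ7}). The key step is to rewrite the left-hand side of (\ref{equ5}) by means of (\ref{equ6}): setting $u:=(x\wedge y)\rightarrow0=(x\wedge y)'$, identity (\ref{equ6}) gives $u\rightarrow x\approx(u\wedge x)\vee u'$, while the double negation law just proved yields $u'=(x\wedge y)''\approx x\wedge y$. Hence (\ref{equ5}) becomes $(x\wedge(x\wedge y)')\vee(x\wedge y)\approx x$, which is exactly (\ref{equ7}). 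So $(L,\vee,\wedge,{}')$ is weakly orthomodular and satisfies the double negation law.

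For part (ii) I would verify the three W-implication identities directly from the definition $x\rightarrow y:=x'\vee(x\wedge y)$. Since $0$ is the least element, $x\rightarrow0\approx x'\vee(x\wedge0)\approx x'$, so (\ref{equ4}) reduces to the double negation law and (\ref{equ6}) holds by the very shape of the definition. For (\ref{equ5}) I would expand $((x\wedge y)\rightarrow0)\rightarrow x\approx(x\wedge y)'\rightarrow x\approx(x\wedge y)''\vee((x\wedge y)'\wedge x)$ and use the double negation law to rewrite this as $(x\wedge y)\vee(x\wedge(x\wedge y)')$, which equals $x$ by weak orthomodularity (\ref{equ7}).

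For part (iii) there are two round-trip checks. Starting from a W-implication, forming $x':=x\rightarrow0$ and then $x\Rightarrow y:=x'\vee(x\wedge y)$, identity (\ref{equ6}) gives $x\Rightarrow y\approx(x\wedge y)\vee(x\rightarrow0)\approx x\rightarrow y$; starting from the lattice and forming $x\rightarrow y:=x'\vee(x\wedge y)$, the recovered operation $\bar x:=x\rightarrow0\approx x'\vee0\approx x'$ agrees with the original. I expect the only real obstacle to be one of sequencing in part (i): the double negation law must be secured before reducing (\ref{equ5}), since it is precisely the collapse $(x\wedge y)''\approx x\wedge y$ that turns (\ref{equ5}) into (\ref{equ7}); everything else is routine lattice computation.
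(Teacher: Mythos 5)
Your proposal is correct and follows essentially the same route as the paper: part (i) combines (\ref{equ6}), (\ref{equ4}) and (\ref{equ5}) in the same way (you derive identity (\ref{equ7}) directly where the paper verifies the equivalent order-theoretic form $a\le b\Rightarrow b=a\vee(b\wedge a')$), and parts (ii) and (iii) are the same routine computations. No gaps.
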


\begin{proof}
\
\begin{enumerate}
\item[(i)] Let $\rightarrow$ be a W-implication on $\mathbf L$ and $x':=x\rightarrow0$ for all $x\in L$. If $a,b\in L$ and $a\leq b$ then by applying (\ref{equ4}) -- (\ref{equ6}) we compute
\[
a\vee(b\wedge a')=((a\rightarrow0)\wedge b)\vee((a\rightarrow0)\rightarrow0)=(a\rightarrow0)\rightarrow b=((b\wedge a)\rightarrow0)\rightarrow b=b.
\]
Thus $(L,\vee,\wedge,{}')$ is a weakly orthomodular lattice. According to (\ref{equ4}) it satisfies the double negation law.
\item[(ii)] Let $(L,\vee,\wedge,{}')$ be a weakly orthomodular lattice satisfying the double negation law and $x\rightarrow y:=x'\vee(x\wedge y)$ for all $x,y\in L$. Then $x\rightarrow0\approx x'\vee(x\wedge0)\approx x'\vee0\approx x'$. Thus
\begin{enumerate}
\item[(\ref{equ4})] $(x\rightarrow0)\rightarrow0\approx(x')'\approx x$,
\item[(\ref{equ5})] $((x\wedge y)\rightarrow0)\rightarrow x\approx(x\wedge y)'\rightarrow x\approx((x\wedge y)')'\vee((x\wedge y)'\wedge x)\approx(x\wedge y)\vee(x\wedge(x\wedge y)')\approx x$,
\item[(\ref{equ6})] $(x\wedge y)\vee(x\rightarrow0)\approx(x\wedge y)\vee x'\approx x'\vee(x\wedge y)\approx x\rightarrow y$
\end{enumerate}
and hence $\rightarrow$ is a W-implication on $\mathbf L$.
\item[(iii)] If $\rightarrow$ is a W-implication on $\mathbf L$ and $x':=x\rightarrow0$ and $x\Rightarrow y:=x'\vee(x\wedge y)$ for all $x,y\in L$ then
\[
x\Rightarrow y\approx x'\vee(x\wedge y)\approx(x\wedge y)\vee(x\rightarrow0)\approx x\rightarrow y.
\]
If, conversely, $(L,\vee,\wedge,{}')$ is a weakly orthomodular lattice satisfying the double negation law and $x\rightarrow y:=x'\vee(x\wedge y)$ and $\bar x:=x\rightarrow0$ for every $x,y\in L$ then
\[
\bar x\approx x\rightarrow0\approx x'\vee(x\wedge0)\approx x'\vee0\approx x'.
\]
\end{enumerate}
\end{proof}

\begin{remark}
In an orthomodular lattice $x'\vee(x\wedge y)=(p_x(y'))'$. Moreover, in Boolean algebras $x'\vee(x\wedge y)$ coincides with the classical implication since
\[
x'\vee(x\wedge y)\approx(x'\vee x)\wedge(x'\vee y)\approx1\wedge(x'\vee y)\approx x'\vee y.
\]
\end{remark}

Besides the previous results, we can list some elementary but important properties of the implications mentioned above.

\begin{proposition}
Let $\mathbf L=(L,\vee,\wedge,{}')$ be a dually weakly orthomodular lattice and $x\rightarrow y=(x\vee y)^y$. Then
\begin{enumerate}
\item[{\rm(i)}] $x\rightarrow0\approx x'$,
\item[{\rm(ii)}] if $\mathbf L$ has a $1$ then $x\rightarrow1\approx1$,
\item[{\rm(iii)}] if $\mathbf L$ has a $1$ then $1\rightarrow x\approx x$,
\item[{\rm(iv)}] if $\mathbf L$ has a $1$ then $x\rightarrow y=1$ implies $x\leq y$.
\end{enumerate}
\end{proposition}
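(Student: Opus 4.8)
The plan is to reduce all four statements to the closed form $x\rightarrow y=(x\vee y)'\vee y$ and then read each claim off by a one-line computation. First I would record the preliminary identification of the implication: by Theorem~\ref{th2}(ii) the operation $(x\vee y)'\vee y$ is a D-implication on $\mathbf L$, and by Theorem~\ref{th1} the D-implication recovered from the associated family of compatible dually weakly orthomodular sublattices is precisely $(x\vee y)^y=(x\vee y)\rightarrow y=\big((x\vee y)\vee y\big)'\vee y=(x\vee y)'\vee y$. Hence $x\rightarrow y=(x\vee y)^y=(x\vee y)'\vee y$, and this is the form I will use throughout. I will also invoke Lemma~\ref{lem3}: $\mathbf L$ has a smallest element $0$, and whenever it has a greatest element $1$ one has $1'\approx0$.

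With the closed form available, parts (i)--(iii) are immediate substitutions. For (i), $x\rightarrow0=(x\vee0)'\vee0=x'\vee0=x'$, using $x\vee0=x$. For (ii), $x\rightarrow1=(x\vee1)'\vee1=1'\vee1=0\vee1=1$, using $x\vee1=1$ together with $1'=0$. For (iii), $1\rightarrow x=(1\vee x)'\vee x=1'\vee x=0\vee x=x$, again using $1'=0$.

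Part (iv) is the only one that calls for an idea rather than plain substitution, since without de Morgan laws the equation $(x\vee y)'\vee y=1$ resists direct manipulation. Instead of expanding the complement, I would apply identity~(\ref{equ3}), valid for every D-implication, namely $(x\vee y)\wedge(x\rightarrow y)\approx y$. Substituting the hypothesis $x\rightarrow y=1$ gives $y=(x\vee y)\wedge1=x\vee y$, where the last equality uses that $1$ is the greatest element; hence $x\leq y$. The main obstacle, modest as it is, is exactly this choice of tool for (iv); everything else follows mechanically once the closed form for $\rightarrow$ is in place.
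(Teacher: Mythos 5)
Your proposal is correct and follows essentially the same route as the paper: substitute the closed form $x\rightarrow y=(x\vee y)'\vee y$ for (i)--(iii), and for (iv) use $(x\vee y)\wedge(x\rightarrow y)\approx y$ together with $x\rightarrow y=1$ to get $x\leq x\vee y=y$. The paper expands $(x\vee y)\wedge((x\vee y)'\vee y)\approx y$ directly from dual weak orthomodularity rather than citing axiom~(\ref{equ3}), but this is the same computation.
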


\begin{proof}
\
\begin{enumerate}
\item[(i)] $x\rightarrow0\approx(x\vee0)'\vee0\approx x'$,
\item[(ii)] if $\mathbf L$ has a $1$ then $x\rightarrow1\approx(x\vee1)'\vee1\approx1$,
\item[(iii)] if $\mathbf L$ has a $1$ then $1\rightarrow x\approx(1\vee x)'\vee x\approx1'\vee x\approx0\vee x\approx x$,
\item[(iv)] if $\mathbf L$ has a $1$ then $x\rightarrow y=1$ implies $x\leq x\vee y=(x\vee y)\wedge1=(x\vee y)\wedge(x\rightarrow y)=(x\vee y)\wedge((x\vee y)'\vee y)=y$.
\end{enumerate}
\end{proof}

Similarly, we can list several interesting but elementary properties for the W-implication in weakly orthomodular lattices. According to Theorem~\ref{th3}, if $\mathbf L=(L,\vee,\wedge,{}')$ is a weakly orthomodular lattice then a W-implication can be expressed as
\[
x\rightarrow y:=x'\vee(x\wedge y)
\]
for all $x,y\in L$.

We can prove

\begin{proposition}
Let $\mathbf L=(L,\vee,\wedge,{}')$ be a weakly orthomodular lattice and define $x\rightarrow y:=x'\vee(x\wedge y)$ for all $x,y\in L$. Then
\begin{enumerate}
\item[{\rm(i)}] $x\rightarrow x\approx1$,
\item[{\rm(ii)}] $x\rightarrow1\approx1$,
\item[{\rm(iii)}] $x\vee(x\rightarrow y)\approx1$,
\item[{\rm(iv)}] $x\rightarrow(x\wedge y)\approx x\rightarrow y$,
\item[{\rm(v)}] $x\leq y$ implies $x\rightarrow y=1$,
\item[{\rm(vi)}] if $\mathbf L$ has a $0$ then $x\rightarrow0\approx x'$.
\end{enumerate}
\end{proposition}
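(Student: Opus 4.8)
The plan is to treat each of the six items as a short direct computation starting from the definition $x\rightarrow y=x'\vee(x\wedge y)$ and using only elementary lattice identities (commutativity, associativity, idempotence and absorption for $\vee$ and $\wedge$) together with the two facts supplied by Lemma~\ref{lem3}: that $\mathbf L$ has a greatest element $1$ and that it satisfies $x\vee x'\approx1$. Notably, the weak orthomodular law~(\ref{equ7}) itself is never needed beyond the consequences already extracted in Lemma~\ref{lem3}, so no manipulation involving de Morgan laws, antitonicity of $'$, or the double negation law enters the argument.

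For (i), (ii) and (v) the strategy is to reduce the right-hand side to the term $x'\vee x$ and then invoke $x\vee x'\approx1$. Indeed $x\rightarrow x=x'\vee(x\wedge x)=x'\vee x$; since $1$ is the top element, $x\wedge1=x$ gives $x\rightarrow1=x'\vee x$ for (ii); and for (v) the hypothesis $x\leq y$ yields $x\wedge y=x$, again collapsing the expression to $x'\vee x$. For (iii) I would regroup by associativity, writing $x\vee(x\rightarrow y)=(x\vee x')\vee(x\wedge y)=1\vee(x\wedge y)$, and conclude by the fact that $1$ is absorbing for $\vee$. Item (iv) is purely a lattice identity: associativity and idempotence give $x\wedge(x\wedge y)=x\wedge y$, whence $x\rightarrow(x\wedge y)=x'\vee(x\wedge y)=x\rightarrow y$. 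Finally (vi) follows at once from $x\wedge0=0$ and $x'\vee0=x'$.

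Since every step is routine, there is no genuine obstacle here; the only point deserving care is that $'$ is in general neither an involution nor antitone in a weakly orthomodular lattice, so one must verify that each computation avoids invoking such properties—which the reductions above indeed do, relying solely on the single identity $x\vee x'\approx1$ and the behaviour of the bounds. The commutativity of $\vee$ is used silently to pass between $x\vee x'$ and $x'\vee x$.
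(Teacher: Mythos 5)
Your proof is correct and matches the paper's argument essentially step for step: each item is reduced by elementary lattice identities to $x'\vee x\approx 1$ (via Lemma~\ref{lem3}) or to an absorption with the bounds. Nothing further to add.
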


\begin{proof}
\
\begin{enumerate}
\item[(i)] $x\rightarrow x\approx x'\vee(x\wedge x)\approx x'\vee x\approx1$,
\item[(ii)] $x\rightarrow1\approx x'\vee(x\wedge1)\approx x'\vee x\approx1$,
\item[(iii)] $x\vee(x\rightarrow y)\approx x\vee x'\vee(x\wedge y)\approx1$,
\item[(iv)] $x\rightarrow(x\wedge y)\approx x'\vee(x\wedge x\wedge y)\approx x'\vee(x\wedge y)\approx x\rightarrow y$,
\item[(v)] $x\leq y$ implies $x\rightarrow y=x'\vee(x\wedge y)=x'\vee x=1$,
\item[(vi)] if $\mathbf L$ has a $0$ then $x\rightarrow0\approx x'\vee(x\wedge0)\approx x'\vee0\approx x'$. 
\end{enumerate}
\end{proof}

Now, we want to investigate the question how to characterize weakly orthomodular or dually weakly orthomodular lattices via the systems of their principal filters or principal ideals, respectively, which are determined by special mappings of the lattice into the interval $[0,1]$ of the real numbers. Hence, such mappings can be considered as generalized measures.

\begin{definition}\label{def3}
For a lattice $\mathbf L=(L,\vee,\wedge,{}')$ with a unary operation we define
\begin{align*}
S_1(\mathbf L):=\{s\colon L\rightarrow[0,1]\mid s(x\vee(y\wedge x'))=s(y)\text{ for all }x,y\in L\text{ with }x\leq y\}, \\
S_2(\mathbf L):=\{s\colon L\rightarrow[0,1]\mid s(y\wedge(x\vee y'))=s(x)\text{ for all }x,y\in L\text{ with }x\leq y\}.
\end{align*}
\end{definition}

Let us mention that the sets $S_1(\mathbf L)$ and $S_2(\mathbf L)$ are non-empty since they contain every constant function from $L$ to $[0,1]$, in particular the constant functions with value $0$ respectively $1$.

\begin{theorem}
Let $\mathbf L=(L,\vee,\wedge,{}')$ be a lattice with a unary operation. Then the following hold:
\begin{enumerate}
\item[{\rm(i)}] $\mathbf L$ is weakly orthomodular if and only if for every $x\in L$ which is not the smallest element of $\mathbf L$ there exists some $s\in S_1(\mathbf L)$ satisfying $s^{-1}(\{1\})=[x)$.
\item[{\rm(ii)}] $\mathbf L$ is dually weakly orthomodular if and only if for every $x\in L$ which is not the greatest element of $\mathbf L$ there exists some $s\in S_2(\mathbf L)$ satisfying $s^{-1}(\{0\})=(x]$.
\end{enumerate}
\end{theorem}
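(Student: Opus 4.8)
The plan is to reduce both parts to the implication form of (dual) weak orthomodularity and then to build, respectively to recognise, the required separating functions as indicator functions of principal filters, resp.\ of complements of principal ideals. I treat (i) in detail; (ii) is entirely dual, with the roles of $\vee,\wedge$, of the smallest and greatest element, and of $[x)$ and $(x]$ interchanged.

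First I would record the elementary observation that, for an arbitrary lattice with a unary operation, identity (\ref{equ7}) is equivalent to the implication ``for all $a\le b$ one has $a\vee(b\wedge a')=b$''. Indeed, specialising (\ref{equ7}) at $x:=b,\ y:=a$ and using $a\le b$ (so that $b\wedge a=a$) yields this implication, while conversely putting $a:=x\wedge y\le x=:b$ recovers (\ref{equ7}). The point of this reformulation is that the term $x\vee(y\wedge x')$ governing membership in $S_1(\mathbf L)$ is exactly the left-hand side of the weak orthomodular law; likewise (\ref{equ8}) is equivalent to ``for all $a\le b$, $b\wedge(a\vee b')=a$'', and $y\wedge(x\vee y')$ is the governing term for $S_2(\mathbf L)$.

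For the forward direction of (i), suppose $\mathbf L$ is weakly orthomodular. Then $x\vee(y\wedge x')=y$ whenever $x\le y$, so the defining equation of $S_1(\mathbf L)$ reduces to $s(y)=s(y)$ and therefore holds for \emph{every} function $L\to[0,1]$; hence $S_1(\mathbf L)$ consists of all such functions. Given any $x$, I would simply take $s$ to be the indicator function of $[x)$, that is $s(y)=1$ for $y\ge x$ and $s(y)=0$ otherwise; then $s\in S_1(\mathbf L)$ and $s^{-1}(\{1\})=[x)$. For the converse I assume the stated family of functions exists and fix $a\le b$, aiming at $a\vee(b\wedge a')=b$. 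Setting $c:=a\vee(b\wedge a')$ I first note the sandwiching $a\le c\le b$ (using $a\le b$ and $b\wedge a'\le b$). If $b$ is the smallest element then $a=b$ and $c=b$ directly; otherwise I pick $s\in S_1(\mathbf L)$ with $s^{-1}(\{1\})=[b)$, and applying the defining property of $S_1(\mathbf L)$ to the pair $a\le b$ gives $s(c)=s(a\vee(b\wedge a'))=s(b)=1$, so $c\in[b)$, i.e.\ $b\le c$; together with $c\le b$ this forces $c=b$. As $a\le b$ was arbitrary, $\mathbf L$ satisfies the weak orthomodular law.

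The main work is the converse pinning argument: its crux is the sandwiching $a\le c\le b$ combined with a function that equals $1$ precisely on $[b)$, which is exactly what turns the equality $s(c)=s(b)$ into the order relation $b\le c$. The only point needing care is the extremal element: the hypothesis is imposed only away from the smallest element, so the degenerate case in which $b$ is the smallest element must be dispatched by hand as above, which also explains why excluding that element from the hypothesis costs nothing (there $[x)=L$ and the constant function $1$ always works). Part (ii) runs dually: for the forward direction one uses that $S_2(\mathbf L)$ then consists of all functions and chooses the indicator function of the complement of $(x]$, while for the converse one sets $d:=b\wedge(a\vee b')$, checks $a\le d\le b$, treats the case where $a$ is the greatest element separately, and uses $s\in S_2(\mathbf L)$ with $s^{-1}(\{0\})=(a]$ to conclude $s(d)=s(a)=0$, hence $d\le a$ and $d=a$.
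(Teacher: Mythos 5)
Your proposal is correct and follows essentially the same route as the paper: indicator-type functions of $[x)$ (resp.\ the complement of $(x]$) for the forward direction, and the sandwiching $a\le a\vee(b\wedge a')\le b$ together with a function whose level set is exactly $[b)$ for the converse, with the same separate treatment of the extremal element. The only cosmetic difference is that you make explicit the equivalence between identity (\ref{equ7}) and its implication form, which the paper uses tacitly.
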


\begin{proof}
Let $a,b\in L$ and assume $a\leq b$.
\begin{enumerate}
\item[(i)] If $\mathbf L$ is weakly orthomodular then for every $x\in L$ every $s\colon L\rightarrow[0,1]$ satisfying $s(y)=1$ if $y\geq x$ and $s(y)<1$ otherwise is an element of $S_1(\mathbf L)$ satisfying $s^{-1}(\{1\})=[x)$. Conversely, assume that for every $x\in L$ which is not the smallest element of $\mathbf L$ there exists some $s\in S_1(\mathbf L)$ satisfying $s^{-1}(\{1\})=[x)$. If $b$ is the smallest element of $\mathbf L$ then $a\vee(b\wedge a')=b$ since $a\vee(b\wedge a')\leq b$. Now assume $b$ not to be the smallest element of $\mathbf L$. Then there exists some $s\in S_1(\mathbf L)$ with $s^{-1}(\{1\})=[b)$. Now $s(a\vee(b\wedge a'))=s(b)=1$ and hence $a\vee(b\wedge a')\geq b$. Together with $a\vee(b\wedge a')\leq b$ we obtain $a\vee(b\wedge a')=b$.
\item[(ii)] If $\mathbf L$ is dually weakly orthomodular then for every $x\in L$ every $s\colon L\rightarrow[0,1]$ satisfying $s(y)=0$ if $y\leq x$ and $s(y)>0$ otherwise is an element of $S_2(\mathbf L)$ satisfying $s^{-1}(\{0\})=(x]$. Conversely, assume that for every $x\in L$ which is not the greatest element of $\mathbf L$ there exists some $s\in S_2(\mathbf L)$ satisfying $s^{-1}(\{0\})=(x]$. If $a$ is the greatest element of $\mathbf L$ then $b\wedge(a\vee b')=a$ since $b\wedge(a\vee b')\geq a$. Now assume $a$ not to be the greatest element of $\mathbf L$. Then there exists some $s\in S_2(\mathbf L)$ with $s^{-1}(\{0\})=(a]$. Now $s(b\wedge(a\vee b'))=s(a)=0$ and hence $b\wedge(a\vee b')\leq a$. Together with $b\wedge(a\vee b')\geq a$ we obtain $b\wedge(a\vee b')=a$.
\end{enumerate}
\end{proof}

One can ask why the function $s\colon L\rightarrow[0,1]$ was introduced in this way. The right answer is that it was mentioned as a generalized measure on a lattice with complementation. The problem is that we cannot assume de Morgan laws in our case. Thus relatively strong assumptions are necessary as stated in Definition~\ref{def3}. However, we are able to get a more detailed way how to introduce such functions.

For a lattice $(L,\vee,\wedge,{}')$ with a unary operation $'$ and a mapping $s\colon L\rightarrow[0,1]$ we define the following conditions:
\begin{enumerate}
\item[(i)] $s(1)=1$,
\item[(ii)] if $x,y\in L$ and $x\leq y$ then $s(x\vee y')=s(x)+s(y')$,
\item[(iii)] if $x,y\in L$ and $x\leq y$ then $s((x\vee y')\vee(x'\wedge y))=1$,
\item[(iv)] if $x,y\in L$ and $x\leq y$ then $s(x\vee(y\wedge(x\vee y'))')=1$.
\end{enumerate}
It should be mentioned that conditions (iii) and (iv) are not so strong as it looks at the first glance. Namely, if $(L,\vee,\wedge,{}')$ satisfies de Morgan laws then
\begin{align*}
(x\vee y')\vee(x'\wedge y) & \approx(x'\wedge y)'\vee(x'\wedge y)\approx1, \\
 x\vee(y\wedge(x\vee y'))' & \approx x\vee(y'\vee(x\vee y')')\approx(x\vee y')\vee(x\vee y')'\approx1.
\end{align*}
Thus (iii) and (iv) follow directly by the simple condition (i). In other words, this generalized measure $s$ does not distinguish certain terms that coincide whenever de Morgan laws hold.

Now we can state

\begin{proposition}
Let $\mathbf L=(L,\vee,\wedge,{}')$ be a lattice with a complementation and $s\colon L\rightarrow[0,1]$ satisfy {\rm(i)} and {\rm(ii)}. Then $s\in S_1(\mathbf L)$ if $s$ satisfies {\rm(iii)} and $s\in S_2(\mathbf L)$ if $s$ satisfies {\rm(iv)}.
\end{proposition}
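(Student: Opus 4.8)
The plan is to first distill a single normalization identity out of conditions (i) and (ii), and then to reduce each half of the statement to one application of (ii) after a short rewriting of the term displayed in (iii), respectively (iv). The key preliminary observation is that $s(w)+s(w')=1$ holds for every $w\in L$: applying (ii) with both variables equal to $w$ (which is legitimate since $w\le w$) gives $s(w\vee w')=s(w)+s(w')$, and because $'$ is a complementation we have $w\vee w'=1$, so (i) yields $s(w)+s(w')=1$. This one identity carries most of the argument.

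For the assertion about $S_1(\mathbf L)$, I would fix $x\le y$ and set $p:=x\vee(y\wedge x')$. Using $y\wedge x'=x'\wedge y$ together with commutativity and associativity of $\vee$, the term inside (iii) rewrites as $(x\vee y')\vee(x'\wedge y)=y'\vee p$, and one checks the order relations $x\le p\le y$. Thus (iii) reads $s(p\vee y')=1$, whereas (ii) applied to $p\le y$ gives $s(p\vee y')=s(p)+s(y')$; hence $s(p)+s(y')=1$. Comparing this with the normalization identity $s(y)+s(y')=1$ yields $s(p)=s(y)$, i.e.\ $s\bigl(x\vee(y\wedge x')\bigr)=s(y)$, which is exactly the membership condition for $S_1(\mathbf L)$.

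For the assertion about $S_2(\mathbf L)$ I would argue dually, fixing $x\le y$ and putting $r:=y\wedge(x\vee y')$, for which $x\le r\le y$. Here (iv) is already in the convenient shape $s(x\vee r')=1$, and (ii) applied to $x\le r$ gives $s(x\vee r')=s(x)+s(r')$, so $s(x)+s(r')=1$. Combined with $s(r)+s(r')=1$ this forces $s(x)=s(r)$, that is $s\bigl(y\wedge(x\vee y')\bigr)=s(x)$, the membership condition for $S_2(\mathbf L)$.

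The only step requiring a moment's thought is the algebraic identification of the bracketed term of (iii) as $y'\vee p$, where $p$ is precisely the $S_1$-term, together with the order estimates $x\le p\le y$ and $x\le r\le y$ that make (ii) applicable; once these are in place, both halves collapse to a substitution into the normalization identity $s(w)+s(w')=1$.
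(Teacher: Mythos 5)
Your proposal is correct and follows essentially the same route as the paper's proof: both first derive the normalization identity $s(w)+s(w')=1$ from (i) and (ii) applied to $w\le w$ using that $'$ is a complementation, and then apply (ii) to the pairs $x\vee(y\wedge x')\le y$ and $x\le y\wedge(x\vee y')$ so that (iii), respectively (iv), collapses the resulting sum to $1$. The only cosmetic difference is that you name the terms $p$ and $r$ and make the lattice rewriting $(x\vee y')\vee(x'\wedge y)=y'\vee p$ explicit, which the paper does inline.
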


\begin{proof}
Let $a,b\in L$. Since $a\leq a$ we have
\[
s(a)+s(a')=s(a\vee a')=s(1)=1
\]
according to (i) and (ii) whence $s(a')=1-s(a)$. Now assume $a\leq b$. Then $a\vee(b\wedge a')\leq b$ and
\[
s(a\vee(a'\wedge b))+s(b')=s((a\vee(a'\wedge b))\vee b')=s((a\vee b')\vee(a'\wedge b))=1
\]
according to (ii) and (iii) and hence
\[
s(a\vee(b\wedge a'))=1-s(b')=1-(1-s(b))=s(b),
\]
i.e.\ $s\in S_1(\mathbf L)$. Similarly, we have $a\leq b\wedge(a\vee b')$ and
\[
s(a)+s((b\wedge(a\vee b'))')=s(a\vee(b\wedge(a\vee b'))')=1
\]
according to (ii) and (iv) whence
\[
s(b\wedge(a\vee b')=1-(1-s(b\wedge(a\vee b')))=1-s((b\wedge(a\vee b'))')=s(a)
\]
proving $s\in S_2(\mathbf L)$.
\end{proof}

\begin{example}
Consider the lattice M$_3$ depicted in Fig.~1
\vspace*{-4mm}
\begin{center}
\setlength{\unitlength}{7mm}
\begin{picture}(6,6)
\put(3,1){\circle*{.3}}
\put(1,3){\circle*{.3}}
\put(3,3){\circle*{.3}}
\put(5,3){\circle*{.3}}
\put(3,5){\circle*{.3}}
\put(3,1){\line(-1,1)2}
\put(3,1){\line(0,1)4}
\put(3,1){\line(1,1)2}
\put(3,5){\line(-1,-1)2}
\put(3,5){\line(1,-1)2}
\put(2.85,.3){$0$}
\put(.4,2.85){$a$}
\put(3.3,2.85){$b$}
\put(5.3,2.85){$c$}
\put(2.85,5.35){$1$}
\put(2.2,-.4){{\rm Fig.~1}}
\end{picture}
\end{center}
\vspace*{-1mm}
where the unary operation $'$ is defined as follows:
\[
\begin{array}{c|ccccc}
 x & 0 & a & b & c & 1 \\
\hline
x' & 1 & b & c & a & 0
\end{array}
\]
Let $s\colon{\rm M}_3\rightarrow[0,1]$ be defined by
\[
\begin{array}{c|ccccc}
  x  & 0 &  a  &  b  &  c  & 1 \\
\hline	
s(x) & 0 & 1/2 & 1/2 & 1/2 & 1
\end{array}
\]
One can easily check that {\rm(i)} -- {\rm(iv)} are satisfied. Thus, $s\in S_1(\mathbf L)\cap S_2(\mathbf L)$.
\end{example}

A connection between orthomodular lattices and residuated structures was investigated by the authors in \cite{CL17a} and \cite{CL17b}. Despite the fact that the complementation in lattices being both weakly and dually weakly orthomodular need not be antitone, we are still able to find a residuated structure which can be assigned to every such lattice. This is important from the point of view of algebraic semantics because residuated structures are applied in algebraic semantics of several fuzzy logics. Thus, although we started with a generalization of the semantics of the logic of quantum mechanics, we can provide a bridge between these rather different approaches. For this reason, the W-implication introduced before Theorem~\ref{th3} turns out to be more appropriate.

\begin{lemma}\label{lem1}
Let $\mathbf L=(L,\vee,\wedge,{}')$ be a dually weakly orthomodular lattice and $a,b,c\in L$ and define
\begin{align*}
      x\odot y & :=(x\vee y')\wedge y, \\
x\rightarrow y & :=x'\vee(x\wedge y)
\end{align*}
for all $x,y\in L$. If $a\leq b\rightarrow c$ then $a\odot b\leq c$.
\end{lemma}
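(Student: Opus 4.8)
The plan is to reduce the entire statement to the working form of dual weak orthomodularity that the excerpt already employs in the proof of Theorem~\ref{th2}(i), namely the implication $u\le v\Rightarrow u=v\wedge(u\vee v')$, which is the reformulation of identity~(\ref{equ8}). This is the only structural fact I intend to use; in particular I will invoke neither an involution nor de Morgan laws, both of which may fail here. Unfolding the definitions, the hypothesis reads $a\le b'\vee(b\wedge c)$ and the goal is $(a\vee b')\wedge b\le c$, so I must bridge from an inequality about a join $b'\vee(b\wedge c)$ to one about the meet $(a\vee b')\wedge b$.

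The key step is to apply the dual orthomodular law to the comparable pair $u:=b\wedge c$ and $v:=b$ (comparable since $b\wedge c\le b$). This yields the single identity
\[
b\wedge\bigl((b\wedge c)\vee b'\bigr)=b\wedge c,
\]
which is the crux of the argument: meeting $b$ with the element $(b\wedge c)\vee b'$ collapses back to $b\wedge c$. Everything after this is pure order monotonicity.

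Indeed, from the hypothesis $a\le b'\vee(b\wedge c)=(b\wedge c)\vee b'$ together with the trivial $b'\le(b\wedge c)\vee b'$, I would conclude $a\vee b'\le(b\wedge c)\vee b'$. Meeting both sides with $b$ and then invoking the identity displayed above gives
\[
a\odot b=(a\vee b')\wedge b\le\bigl((b\wedge c)\vee b'\bigr)\wedge b=b\wedge c\le c,
\]
which is exactly the claim.

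The main obstacle is essentially one of recognition rather than computation: spotting that the correct instance of~(\ref{equ8}) is the one with $u=b\wedge c$ and $v=b$, after which the argument is just two applications of monotonicity followed by $b\wedge c\le c$. The pitfall I would avoid is trying to manipulate $a\vee b'$ directly by complementation properties; the clean route is first to absorb $a$ into the upper bound $(b\wedge c)\vee b'$ and only then to intersect with $b$.
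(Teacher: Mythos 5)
Your proof is correct and is essentially the paper's own argument: both reduce to the instance $b\wedge\bigl((b\wedge c)\vee b'\bigr)=b\wedge c$ of the dual weak orthomodular law applied to $b\wedge c\leq b$, and then use monotonicity of $\vee$ and $\wedge$ to pass from $a\leq b\rightarrow c$ to $(a\vee b')\wedge b\leq b\wedge c\leq c$. The only cosmetic difference is that the paper first bounds $(a\vee b')\wedge b$ by $((b\rightarrow c)\vee b')\wedge b$ and then simplifies $(b\rightarrow c)\vee b'$ to $(b\wedge c)\vee b'$, whereas you absorb $a$ into $(b\wedge c)\vee b'$ directly.
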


\begin{proof}
Using dual weak orthomodularity, if $a\leq b\rightarrow c$ we compute
\begin{align*}
a\odot b & =(a\vee b')\wedge b\leq((b\rightarrow c)\vee b')\wedge b=(b'\vee(b\wedge c)\vee b')\wedge b=b\wedge((b\wedge c)\vee b')= \\
         & =b\wedge c\leq c.
\end{align*}
\end{proof}

Consider the lattice ${\rm M}_3$ depicted in Fig.~1 where the unary operation $'$ is defined as follows:
\[
\begin{array}{c|ccccc}
 x & 0 & a & b & c & 1 \\
\hline
x' & 1 & b & c & b & 0
\end{array}
\]
It is easy to check that $(M_3,\vee,\wedge,{}')$ is both weakly and dually weakly orthomodular, but not orthomodular since $(a')'=b'=c\neq a$. Hence it does not satisfy the double negation law. On the other hand, $((x')')'\approx x'$. This is e.g.\ the condition for negation in intuitionistic logic. For such lattices we can state the following theorem.

\begin{theorem}\label{th5}
Let $\mathbf L=(L,\vee,\wedge,{}')$ be a weakly and dually weakly orthomodular lattice satisfying $((x')')'\approx x'$ and $a,b,c\in L$ and define
\begin{align*}
      x\odot y & :=(x\vee y')\wedge y, \\
x\rightarrow y & :=x'\vee(x\wedge y)
\end{align*}
for all $x,y\in L$. Then $\mathbf L$ satisfies the identity $(x\rightarrow y)\odot x\approx x\wedge y$. Moreover,
\[
a\odot(b')'\leq c\text{ if and only if }a\leq(b')'\rightarrow c.
\]
\end{theorem}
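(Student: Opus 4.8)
The plan is to treat the two assertions separately: dual weak orthomodularity yields the identity, while Lemma~\ref{lem1} together with weak orthomodularity and the regularity forced by $((x')')'\approx x'$ yields the equivalence.

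For the identity $(x\rightarrow y)\odot x\approx x\wedge y$ I would simply unfold the definitions. Since $x\rightarrow y=x'\vee(x\wedge y)$, the join $(x\rightarrow y)\vee x'$ collapses to $x'\vee(x\wedge y)$, so that $(x\rightarrow y)\odot x=\big((x\rightarrow y)\vee x'\big)\wedge x=(x'\vee(x\wedge y))\wedge x$. The key point is that $x\wedge y\leq x$, so the dual weak orthomodular law in the equivalent form ``$u\leq v$ implies $v\wedge(u\vee v')=u$'' coming from (\ref{equ8}), applied with $u=x\wedge y$ and $v=x$, gives $x\wedge((x\wedge y)\vee x')=x\wedge y$. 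Commuting the join then turns $(x'\vee(x\wedge y))\wedge x$ into $x\wedge y$, as required.

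For the equivalence set $d:=(b')'$. The hypothesis $((x')')'\approx x'$ forces $d'=((b')')'=b'$ and hence $(d')'=(b')'=d$, so $d$ obeys the double negation law even though $\mathbf L$ need not. The direction from right to left, that $a\leq d\rightarrow c$ implies $a\odot d\leq c$, is exactly Lemma~\ref{lem1} with its variable instantiated to $d$, and it uses only dual weak orthomodularity, which $\mathbf L$ possesses. For the converse, $a\odot d\leq c$ implies $a\leq d\rightarrow c$, my plan is to establish the single inequality $a\leq d\rightarrow(a\odot d)$ and then invoke the monotonicity of $c\mapsto d\rightarrow c=d'\vee(d\wedge c)$ to conclude $a\leq d\rightarrow(a\odot d)\leq d\rightarrow c$. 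To get $a\leq d\rightarrow(a\odot d)$, note that $a\odot d=(a\vee d')\wedge d\leq d$, whence $d\wedge(a\odot d)=a\odot d$ and therefore $d\rightarrow(a\odot d)=d'\vee(a\odot d)=d'\vee((a\vee d')\wedge d)$. Since $d'\leq a\vee d'$, the weak orthomodular law in the form ``$u\leq v$ implies $v=u\vee(v\wedge u')$'' coming from (\ref{equ7}), applied with $u=d'$ and $v=a\vee d'$, gives $a\vee d'=d'\vee((a\vee d')\wedge(d')')$. Here the regularity $(d')'=d$ is precisely what makes the right-hand side equal $d'\vee((a\vee d')\wedge d)=d\rightarrow(a\odot d)$, so $a\leq a\vee d'=d\rightarrow(a\odot d)$.

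I expect the main obstacle to be this last direction: one must recognize that the right auxiliary inequality is $a\leq d\rightarrow(a\odot d)$ and that its proof hinges on applying the weak orthomodular law to the pair $d'\leq a\vee d'$, the step $(d')'=d$ being indispensable. This is exactly where $((x')')'\approx x'$ is consumed, and it explains why the statement is restricted to elements of the form $(b')'$ rather than to arbitrary $b$; for a general element the required double negation may fail and the adjunction need not hold.
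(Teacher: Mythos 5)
Your proposal is correct and follows essentially the same route as the paper: the identity is obtained by unfolding the definitions and applying dual weak orthomodularity to $x\wedge y\leq x$, one direction of the equivalence is Lemma~\ref{lem1}, and the other is the paper's chain $a\leq a\vee b'=b'\vee((a\vee b')\wedge(b')')\leq(b')'\rightarrow c$ merely repackaged as the auxiliary inequality $a\leq(b')'\rightarrow(a\odot(b')')$ followed by monotonicity of $c\mapsto(b')'\rightarrow c$. The key step — applying weak orthomodularity to $b'\leq a\vee b'$ and consuming $((x')')'\approx x'$ to identify $(b')''$ with $b'$ — is exactly the paper's.
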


\begin{proof}
We have
\[
(x\rightarrow y)\odot x\approx(x'\vee(x\wedge y)\vee x')\wedge x\approx x\wedge((x\wedge y)\vee x')\approx x\wedge y.
\]
If $a\leq(b')'\rightarrow c$ then $a\odot(b')'\leq c$ according to Lemma~\ref{lem1}. Conversely, assume $a\odot(b')'\leq c$. Using weak orthomodularity and the identity $((x')')'\approx x'$ we compute
\begin{align*}
a & \leq a\vee b'=b'\vee((a\vee b')\wedge(b')')=b'\vee((b')'\wedge(a\vee((b')')')\wedge(b')')= \\
  & =b'\vee((b')'\wedge(a\odot(b')'))\leq b'\vee((b')'\wedge c)=((b')')'\vee((b')'\wedge c)=(b')'\rightarrow c.
\end{align*}
\end{proof}

For the reader's convenience, we repeat the following concept which is frequently used in the theory of residuated structures.

\begin{definition}\label{def2}
A {\em left residuated {\rm l}-groupoid} is an algebra ${\mathbf L}=(L,\vee,\wedge,\odot,\rightarrow,0,1)$ of type $(2,2,2,2,0,0)$ such that $(L,\vee,\wedge,0,1)$ is a bounded lattice, $x\odot1\approx1\odot x\approx x$ and for all $x,y,z\in L$, $x\odot y\leq z$ is equivalent to $x\leq y\rightarrow z$.
\end{definition}

For lattices being both weakly and dually weakly orthomodular and satisfying the double negation law we can prove a result which is stronger than that of Theorem~\ref{th5}, see the following theorem.

\begin{theorem}\label{th4}
Let $\mathbf L=(L,\vee,\wedge,{}')$ be a weakly and dually weakly orthomodular lattice satisfying the double negation law and $a,b\in L$ and define
\begin{align*}
      x\odot y & :=(x\vee y')\wedge y, \\
x\rightarrow y & :=x'\vee(x\wedge y)
\end{align*}
for all $x,y\in L$. Then $\mathbf L$ is bounded and $(L,\vee,\wedge,\odot,\rightarrow,0,1)$ a left residuated {\rm l}-groupoid satisfying the identity
\[
(x\rightarrow y)\odot x\approx x\wedge y.
\]
Moreover,
\begin{align*}
      a\odot b=a & \text{ if and only if }a\leq b, \\
a\rightarrow b=b & \text{ if and only if }a'\leq b.
\end{align*}
\end{theorem}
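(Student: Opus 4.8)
The plan is to assemble the statement from the results already established, with the double negation law acting as the key that upgrades the restricted residuation of Theorem~\ref{th5} into full residuation. First I would settle boundedness and the basic complementation facts. Since $\mathbf L$ is both weakly and dually weakly orthomodular, Lemma~\ref{lem3} supplies a greatest element $1$ and a smallest element $0$, so $\mathbf L$ is bounded, and moreover $0'=1$, $1'=0$, $x\vee x'=1$ and $x\wedge x'=0$. The two unit laws for $\odot$ are then immediate: using $1'=0$ one gets $x\odot1=(x\vee0)\wedge1=x$ and $1\odot x=(1\vee x')\wedge x=x$.

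Next comes the residuation law, the heart of the $\mathrm l$-groupoid structure. The double negation law gives $((x')')'\approx x'$, so Theorem~\ref{th5} applies; but under double negation $(b')'=b$, so the biconditional $a\odot(b')'\leq c\iff a\leq(b')'\rightarrow c$ of Theorem~\ref{th5} reads simply $a\odot b\leq c\iff a\leq b\rightarrow c$, which is exactly the equivalence required by Definition~\ref{def2}. (One could also argue the two directions separately: the implication $a\leq b\rightarrow c\Rightarrow a\odot b\leq c$ is Lemma~\ref{lem1}, while the converse is the second half of the proof of Theorem~\ref{th5} with $(b')'$ replaced throughout by $b$.) The identity $(x\rightarrow y)\odot x\approx x\wedge y$ is recorded verbatim in Theorem~\ref{th5}. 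Hence $(L,\vee,\wedge,\odot,\rightarrow,0,1)$ is a left residuated $\mathrm l$-groupoid with the stated identity.

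For the first equivalence I would work directly from $a\odot b=(a\vee b')\wedge b$. The forward direction is trivial, since $a\odot b\leq b$ forces $a\leq b$ whenever $a\odot b=a$. For the converse, assuming $a\leq b$, dual weak orthomodularity in the form $b\wedge(a\vee b')=a$ gives $a\odot b=(a\vee b')\wedge b=a$. For the second equivalence, recall $a\rightarrow b=a'\vee(a\wedge b)$; again the forward direction is trivial because $a'\leq a\rightarrow b$, so $a\rightarrow b=b$ forces $a'\leq b$. The converse is where weak orthomodularity and double negation combine: assuming $a'\leq b$, weak orthomodularity applied to the pair $a'\leq b$ yields $b=a'\vee(b\wedge(a')')$, and $(a')'=a$ turns the right-hand side into $a'\vee(a\wedge b)=a\rightarrow b$, so $a\rightarrow b=b$.

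I expect no serious obstacle: every ingredient is already in place, and the only conceptual point is recognizing that the double negation law both makes Theorem~\ref{th5} applicable and collapses its conditional residuation into the unconditional residuation of Definition~\ref{def2}. The mildest care is needed in the converse of the last equivalence, where one must invoke weak orthomodularity with $a'$ (rather than $a$) in the lower position and then clear the double negation to recover $a\rightarrow b$.
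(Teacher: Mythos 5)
Your proof is correct, and for most of the statement it follows the paper's own route: Lemma~\ref{lem3} for boundedness and $1'\approx0$, the same computations for the unit laws, and the same two-sided arguments (dual weak orthomodularity plus $a\odot b\leq b$ for the first equivalence; weak orthomodularity plus $(a')'=a$ plus $a'\leq a\rightarrow b$ for the second). The one place where you genuinely diverge is the residuation law and the identity $(x\rightarrow y)\odot x\approx x\wedge y$: the paper re-proves the hard direction from scratch, computing $a\leq a\vee b'=b'\vee((a\vee b')\wedge(b')')=\cdots=b'\vee(b\wedge(a\odot b))\leq b\rightarrow c$ directly from weak orthomodularity and double negation, whereas you observe that the double negation law implies $((x')')'\approx x'$ and collapses $(b')'$ to $b$, so that both facts drop out of Theorem~\ref{th5} as stated. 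Your derivation is shorter and makes the logical dependence on Theorem~\ref{th5} explicit (Theorem~\ref{th4} really is the special case in which the conditional residuation becomes unconditional); the paper's version buys a self-contained proof that shows exactly where weak orthomodularity and the double negation law enter the computation. Both are valid, and you correctly flag the direct computation as the alternative.
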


\begin{proof}
According to Lemma~\ref{lem3}, $\mathbf L$ is bounded and $1'\approx0$. Let $c\in L$. If $a\odot b\leq c$ then, due to weak orthomodularity of $\mathbf L$ and to the double negation law,
\begin{align*}
a & \leq a\vee b'=b'\vee((a\vee b')\wedge(b')')=b'\vee((a\vee b')\wedge b)=b'\vee(b\wedge(a\vee b')\wedge b)= \\
& =b'\vee(b\wedge(a\odot b))\leq b'\vee(b\wedge c)=b\rightarrow c.
\end{align*}
Conversely, if $a\leq b\rightarrow c$ then $a\odot b\leq c$ according to Lemma~\ref{lem1}. Moreover,
\begin{align*}
                x\odot1 & \approx(x\vee1')\wedge1\approx(x\vee0)\wedge1\approx x\wedge1\approx x, \\
               1\odot x & \approx(1\vee x')\wedge x\approx1\wedge x\approx x, \\
(x\rightarrow y)\odot x & \approx(x'\vee(x\wedge y)\vee x')\wedge x\approx x\wedge((x\wedge y)\vee x')\approx x\wedge y.
\end{align*}
Now the following are equivalent: $a\odot b=a$; $(a\vee b')\wedge b=a$; $b\wedge(a\vee b')=a$; $a\leq b$. Finally, the following are equivalent: $a\rightarrow b=b$; $a'\vee(a\wedge b)=b$; $a'\vee(b\wedge(a')')=b$; $a'\leq b$.
\end{proof}

Of course, every orthomodular lattice is both weakly and dually weakly orthomodular and satisfies the double negation law. In the following we present a lattice being both weakly and dually weakly orthomodular and satisfying the double negation law, but not being orthomodular.

\begin{example}
The lattice whose Hasse diagram is depicted in Fig.~2
\vspace*{-2mm}
\begin{center}
\setlength{\unitlength}{7mm}
\begin{picture}(12,8)
\put(3,1){\circle*{.3}}
\put(1,3){\circle*{.3}}
\put(3,3){\circle*{.3}}
\put(5,3){\circle*{.3}}
\put(9,3){\circle*{.3}}
\put(3,5){\circle*{.3}}
\put(7,5){\circle*{.3}}
\put(9,5){\circle*{.3}}
\put(11,5){\circle*{.3}}
\put(9,7){\circle*{.3}}
\put(3,1){\line(-1,1)2}
\put(3,1){\line(0,1)4}
\put(3,1){\line(1,1)2}
\put(3,1){\line(3,1)6}
\put(1,3){\line(1,1)2}
\put(1,3){\line(3,1)6}
\put(3,3){\line(3,1)6}
\put(5,3){\line(-1,1)2}
\put(5,3){\line(3,1)6}
\put(9,3){\line(-1,1)2}
\put(9,3){\line(0,1)4}
\put(9,3){\line(1,1)2}
\put(3,5){\line(3,1)6}
\put(7,5){\line(1,1)2}
\put(11,5){\line(-1,1)2}
\put(2.85,.3){$0$}
\put(.4,2.85){$a$}
\put(2.4,2.85){$b$}
\put(4.3,2.85){$c$}
\put(8.85,2.3){$d$}
\put(2.85,5.35){$e$}
\put(7.35,4.85){$f$}
\put(9.25,4.85){$g$}
\put(11.3,4.85){$h$}
\put(8.85,7.35){$1$}
\put(5.7,0){{\rm Fig.~2}}
\end{picture}
\end{center}
\vspace*{-1mm}
whose unary operation $'$ is defined by
\[
\begin{array}{c|cccccccccc}
x  & 0 & a & b & c & d & e & f & g & h & 1 \\
\hline
x' & 1 & g & h & f & e & d & c & a & b & 0
\end{array}
\]
is both weakly and dually weakly orthomodular and satisfies the double negation law, but it is not orthomodular since $'$ is not monotone since $a\leq f$, but $f'=c\not\leq g=a'$.
\end{example}

We can prove also the converse of Theorem~\ref{th4}.

\begin{theorem}
Let $\mathbf L=(L,\vee,\wedge,\odot,\rightarrow,0,1)$ be a left residuated {\rm l}-groupoid satisfying the identities
\begin{align*}
(x\rightarrow0)\rightarrow0 & \approx x, \\
                   x\odot y & \approx(x\vee(y\rightarrow0))\wedge y, \\
             x\rightarrow y & \approx(x\rightarrow0)\vee(x\wedge y)
\end{align*}
and define $x':=x\rightarrow0$ for all $x\in L$. Then $(L,\vee,\wedge,{}')$ is a weakly and dually weakly orthomodular lattice satisfying the double negation law.
\end{theorem}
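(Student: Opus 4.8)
The plan is to reverse the argument of Theorem~\ref{th4}. Writing $x'=x\rightarrow0$, the three hypotheses read $(x')'\approx x$, $\;x\odot y\approx(x\vee y')\wedge y$ and $x\rightarrow y\approx x'\vee(x\wedge y)$, so the double negation law is literally the first hypothesis and needs no work. Since $(L,\vee,\wedge,0,1)$ is already a bounded lattice by the definition of a left residuated l-groupoid, it remains only to verify weak and dual weak orthomodularity, which I will establish in their quasi-identity form: $a\leq b$ implies $a\vee(b\wedge a')=b$ respectively $a\leq b$ implies $b\wedge(a\vee b')=a$. The whole strategy is to feed the residuation equivalence $x\odot y\leq z\iff x\leq y\rightarrow z$ with two carefully chosen instances, using only the three identities and $a\leq b$ to evaluate the relevant $\rightarrow$ and $\odot$ terms.

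For dual weak orthomodularity, fix $a\leq b$. The inequality $a\leq b\wedge(a\vee b')$ is immediate. For the converse I compute, using $x\rightarrow y\approx x'\vee(x\wedge y)$ together with $a\leq b$, that $b\rightarrow a=b'\vee(b\wedge a)=b'\vee a$, whence $a\leq b'\vee a=b\rightarrow a$. Residuation (in the direction $x\leq y\rightarrow z\Rightarrow x\odot y\leq z$, applied with $x=z=a$ and $y=b$) then gives $a\odot b\leq a$, i.e.\ $(a\vee b')\wedge b\leq a$. Combined with the reverse inequality this yields $b\wedge(a\vee b')=a$.

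For weak orthomodularity, again fix $a\leq b$; here $a\vee(b\wedge a')\leq b$ is immediate. The key step is to recognise the term $a\vee(b\wedge a')$ as a value of $\rightarrow$: using the double negation law, $a'\rightarrow b=(a')'\vee(a'\wedge b)=a\vee(b\wedge a')$. By residuation (now in the direction $x\odot y\leq z\Rightarrow x\leq y\rightarrow z$, applied with $x=z=b$ and $y=a'$) it suffices to check $b\odot a'\leq b$. But $b\odot a'=(b\vee(a')')\wedge a'=(b\vee a)\wedge a'=b\wedge a'\leq b$, where I used double negation and $b\vee a=b$. Hence $b\leq a'\rightarrow b=a\vee(b\wedge a')$, and equality follows.

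The only nonroutine point is the choice of the two residuation instances; once one notices that $b\rightarrow a$ collapses to $b'\vee a$ and that $a'\rightarrow b$ reproduces exactly the weak orthomodularity term $a\vee(b\wedge a')$, both verifications reduce to the trivial inequalities $a\leq b'\vee a$ and $b\wedge a'\leq b$. Everything else is forced by the hypotheses, and — as elsewhere in the paper — no appeal to de Morgan laws is required, which is essential since those may fail here.
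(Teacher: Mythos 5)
Your proof is correct and follows essentially the same route as the paper's: the same two residuation instances ($a\le b\rightarrow a$ yielding $a\odot b\le a$ for dual weak orthomodularity, and $b\odot a'\le b$ yielding $b\le a'\rightarrow b$ for weak orthomodularity), with the double negation law read off directly from the first identity. No substantive differences.
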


\begin{proof}
Let $a,b\in L$ and $a\leq b$. Then $a\leq b'\vee(b\wedge a)=b\rightarrow a$ and hence $b\wedge(a\vee b')=a\odot b\leq a$ according to left-adjointness. This shows $b\wedge(a\vee b')\leq a$. Since the converse inequality is obvious, $(L,\vee,\wedge,{}')$ is dually weakly orthomodular. Now $a\vee(b\wedge a')\leq b$. To prove the converse inequality, we must use the double negation law. We have
\[
b\odot a'=(b\vee(a')')\wedge a=(b\vee a)\wedge a'=b\wedge a'\leq b.
\]
Thus, using left-adjointness, we obtain that $b\odot a'\leq b$ is equivalent to
\[
b\leq a'\rightarrow b=(a')'\vee(b\wedge a')=a\vee(b\wedge a')
\]
proving $a\vee(b\wedge a')\geq b$. Together we have $b=a\vee(b\wedge a')$. Thus $(L,\vee,\wedge,{}')$ is also weakly orthomodular and, obviously, satisfies the double negation law.
\end{proof}

Authors' addresses:

Ivan Chajda \\
Palack\'y University Olomouc \\
Faculty of Science \\
Department of Algebra and Geometry \\
17.\ listopadu 12 \\
771 46 Olomouc \\
Czech Republic \\
ivan.chajda@upol.cz

Helmut L\"anger \\
TU Wien \\
Faculty of Mathematics and Geoinformation \\
Institute of Discrete Mathematics and Geometry \\
Wiedner Hauptstra\ss e 8-10 \\
1040 Vienna \\
Austria, and \\
Palack\'y University Olomouc \\
Faculty of Science \\
Department of Algebra and Geometry \\
17.\ listopadu 12 \\
771 46 Olomouc \\
Czech Republic \\
helmut.laenger@tuwien.ac.at
\end{document}